\title[Equivariant Euler-Poincar\'e Characteristic]
 {On equivariant Euler-Poincar\'e characteristic in sheaf cohomology}
\author[S. Kionke]{Steffen Kionke}
\address{Steffen Kionke\\
         Max-Planck-Institut f\"ur Mathematik\\
         Vivatsgasse 7\\
         53111 Bonn\\ Germany}
\email{skionke@mpim-bonn.mpg.de}
\author[J. Rohlfs]{J\"urgen Rohlfs}
\address{J\"urgen Rohlfs\\
Katholische Universit\"at Eichst\"att-Ingolstadt\\
Ostenstrasse 26\\
Kollegiengeb\"aude I/Bau B\\
85072 Eichst\"att\\ Germany}
\email{juergen.rohlfs@ku-eichstaett.de}
\subjclass[2010]{Primary 55N30; Secondary 54H15}
\thanks{The first author wants to thank the MPIM in Bonn for their hospitality and support.}
\theoremstyle{plain}
\newtheorem{lemma}{Lemma}
\newtheorem*{theorem}{Theorem}
\newtheorem{corollary}{Corollary}
\theoremstyle{definition}
\newtheorem*{remark}{Remark}
\newtheorem*{definition*}{Definition}
\DeclareMathOperator{\ind}{ind}
\DeclareMathOperator{\Hom}{Hom}
\DeclareMathOperator{\Aut}{Aut}
\DeclareMathOperator{\derivedR}{R}
\DeclareMathOperator{\Der}{D}
\DeclareMathOperator{\cla}{Cl}
\DeclareMathOperator{\tr}{tr}
\providecommand{\RR}{\underline{\derivedR}}
\providecommand{\calF}{\mathcal{F}}
\providecommand{\calE}{\mathcal{E}}
\providecommand{\calF}{\mathcal{F}}
\providecommand{\calL}{\mathcal{L}}
\providecommand{\com}[1]{#1^{\bullet}}
\providecommand{\Gzero}{\mathsf{G_0}}
\providecommand{\Sh}{\mathsf{Sh}}
\providecommand{\isomorph}{\stackrel{\simeq}{\longrightarrow}}
\providecommand{\bbR}{\mathbb{R}}
\providecommand{\bbQ}{\mathbb{Q}}
\providecommand{\bbZ}{\mathbb{Z}}
\providecommand{\bbF}{\mathbb{F}}
\providecommand{\bbC}{\mathbb{C}}
\providecommand{\cF}{\textup{(\textbf{F})} }
\providecommand{\cFone}{\textup{(\textbf{F1})} }
\providecommand{\cFtwo}{\textup{(\textbf{F2})} }
\begin{document}

\begin{abstract}
Let $X$ be a topological Hausdorff space together with a continuous action of a finite group $G$. Let $R$
be the ring of integers of a number field~$F$. Let $\calE$ be a $G$-sheaf of flat $R$-modules over $X$ and let
$\Phi$ be a $G$-stable paracompactifying family of supports on $X$. We show that under some natural cohomological
finiteness conditions the Lefschetz number of the action of $g \in G$ on the cohomology
$ \com{H}_\Phi(X,\calE) \otimes_{R} F $ equals the Lefschetz number of the $g$-action on
$ \com{H}_{\Phi|X^G}(X^g, \calE_{|X^g}) \otimes_{R} F $, where $X^g$ is the set of fixed points of $g$
in $X$. More generally, the class $\sum_j (-1)^j [H^j_\Phi (X,\calE) \otimes_R F]$ in the character group
equals a sum $\sum_{[H]} \sum_{\lambda \in \widehat{H}_F} m_\lambda [\ind^G_H (V_\lambda)] $ of representations induced from
irreducible $F$-rational representations $\:   V_\lambda  \:$ of $\:  H \:,$ where $[H]$ runs in the set of
$G$-conjugacy classes of subgroups of $G$. The integral coefficients $m_\lambda$ are explicitly
determined.
\end{abstract}

\maketitle

\section{Introduction and main results}

The most elementary classical version of the Lefschetz fixed point formula says that the Lefschetz  number
$\calL(g)$ of a simplicial automorphism $g$ of finite order on a finite simplicial complex $X$ equals the Euler-Poincar\'e
characteristic of the fixed point set $X^g \subset X $ of the $g$-action. Here $\calL(g)$
is computed  on $ H^\ast (X,\bbQ)$. Brown \cite{Brown1982} (based on Zarelua \cite{Zarelua1969}) and independently Verdier \cite{Verdier1973}
have extended this formula to more general spaces under the assumption of cohomological finiteness conditions.
Verdier uses cohomology with compact supports.

The objective of this paper is to generalize this Lefschetz fixed point formula to Hausdorff spaces with a
continuous action of a finite group $G$ and to cohomology of $G$-sheaves with a paracompactifying family of supports.
For applications of the Lefschetz fixed point formula
to cohomology of arithmetic groups see e.g.~\cite{Rohlfs1990}.

\subsection{Notation}
 Throughout $F$ denotes an algebraic number field and $R$ denotes its ring of integers.
Let $G$ be a finite group, then $\Gzero(F[G])$ denotes the Grothendieck group of finitely generated $F[G]$-modules.
For every subgroup $H$ of $G$ there is the induction homomorphism $\ind_H^G: \Gzero(F[H]) \to \Gzero(F[G])$
 which maps $[M]$ to $[F[G]\otimes_{F[H]}M]$.
 For $ g \in G $ the trace of the $g$-action induces a morphism
$\tr(g): \Gzero(F[G]) \longrightarrow R$.

Let $Y$ be a Hausdorff space and let $\Phi$ be a family of supports on $Y$.
Let $k$ be a ring and $\calE$ be a sheaf of left $k$-modules on $Y$. If the cohomology $\com{H}_\Phi(Y,\calE)$
is finitely generated as $k$-module, then we say that the triple $(Y,\Phi,\calE)$ is of \emph{finite type} with respect to $k$.

Given a sheaf $\calE$ of $R$-modules on $Y$, we write
$\chi_\Phi(Y,\calE; F)$ for the Euler-Poincar\'e characteristic of the graded $F$-vectorspace
$\com{H}_\Phi(Y,\calE)\otimes_R F$ whenever it is \mbox{finite} dimensional.
Similarly, if a finite group $G$ acts continuously on $Y$ and $\calE$ is $G$-equivariant, then we denote the Euler-Poincar\'e characteristic
of the graded $F[G]$-module $\com{H}_\Phi(Y,\calE)\otimes_R F$ in the Grothendieck
group $\Gzero(F[G])$ by $\chi_\Phi(Y,\calE; F[G])$.
 The image of $\chi_\Phi (Y, \calE; F[G]) $ under the morphism $\tr(g)$ is the
\emph{Lefschetz number} $ \calL_\Phi (g, \calE; F) = \sum^\infty_{j = 0} (-1)^j \tr (g| H^j _\Phi (Y ,\calE) \otimes_R F) \:.$

\subsection{Statement of results} 
Denote by $X$ a topological Hausdorff space together with a continuous action of a finite group $G$.
We fix a paracompactifying $G$-stable family of supports $\Phi$ on $X$.
For a subgroup $H \leq G$ we denote the normalizer of $H$ in $G$ by $N_G(H)$ or $N(H)$.
We write $X^H$ for the set of points in $X$ which are fixed by $H$ and we write $X_H$ for
the set of points in $X$ whose stabilizer is exactly the group $H$. Note that $X^H$ is closed in $X$ and $X_H$ is open in $X^H$.
Let $\cla(G)$ be the set of conjugacy classes of subgroups of $G$. The paracompactifying
family $\Phi$ induces paracompactifying families on the locally closed subspaces
$ X_H, X^H, X_C := \bigcup_{H \in C} X_H $ for $C \in \cla(G)$  and on the
quotient space $ X_H/ N(H) $. For simplicity these families will be
denoted also by $\Phi$.

By $ \widehat{H}_F$ we denote the set of equivalence classes of
irreducible representations of $H$ on finite dimensional $F$-vectorspaces. If $\lambda \in \widehat{H}_F$,
we write $V_\lambda$ for a representative of $\lambda$. We define 
$\deg V_\lambda := \dim_F \bigl(\Hom_{F[H]} (V_\lambda, V_\lambda )\bigr)$. 
\vspace{0.8em}

  Let $\calE$ be a $G$-sheaf of $R$-modules on $X$.
  We say that $\calE$ satisfies the finiteness condition \cF  if
  for every subgroup $H$ of $G$ the following hold:
  \begin{enumerate}
   \item  The triple $(X_H,\Phi, \calE_{|X_H}) $ is of finite type w.r.t.~$R$, and
   \item  for any $\lambda \in \widehat{N(H)}_F$
    there is an $N(H)$-invariant lattice $L_\lambda$ in $V_\lambda$ such that the triple
    $(X_H,\Phi,\Hom_{R[H]}\bigl(L_\lambda, \calE_{|X_H} \bigr))$ is of finite type w.r.t.~$R$.

  \end{enumerate}
We comment on this condition in section \ref{sec:Comments}.

\begin{theorem} Let $X$, $G$, $\Phi$ be as above and assume that the cohomological
$\Phi$-dimension of $X$ is finite. Let $ \calE$ be a $G$-sheaf of flat $R$-modules such that
condition \cF holds. Then
\begin{align*}
         \chi_\Phi\bigl(X,\calE; F[G]\bigr) &=
        \sum_{[H] \in \cla(G)} \frac{|H|}{|N(H)|} \ind_H^G\Bigl(\chi_{\Phi}\bigl(X_H, \calE_{|X_H}; F[H]\bigr)\Bigr)\\
         & = 
        \sum_{[H] \in \cla(G)} \sum_{\lambda \in \widehat{H}_F}\frac{|H|\cdot e(\lambda)}{|N(H)| \cdot \deg(V_\lambda)} \ind_H^G\bigl([V_\lambda\bigr]).
\end{align*}
   where $e(\lambda)$ denotes the Euler characteristic
   $\chi_{\Phi}\bigl(X_H, \Hom_{R[H]}(M_\lambda, \calE_{|X_H}); F\bigr)$ for any
    $H$-stable $R$-lattice $M_\lambda \subset V_\lambda$. 
\end{theorem}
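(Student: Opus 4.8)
The plan is to establish the first equality in $\Gzero(F[G])$ and then read off the second by splitting into isotypic components. Since $F$ has characteristic zero, $F[G]$ is semisimple and the characters of its simple modules are linearly independent; hence the homomorphism $\Gzero(F[G]) \to \prod_{g \in G} R$, $\xi \mapsto \bigl(\tr(g)(\xi)\bigr)_{g}$, is injective, and it suffices to show that both sides of the first identity have the same image under $\tr(g)$ for every $g \in G$. By definition the left-hand trace is the Lefschetz number $\calL_\Phi(g,\calE;F)$, and the single-element Lefschetz fixed-point formula gives $\calL_\Phi(g,\calE;F) = \calL_\Phi(g,\calE_{|X^g};F)$. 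Since $g$ fixes $X^g$ pointwise it preserves each stratum $X_H$ with $g \in H$, and $X^g = \bigsqcup_{H \ni g} X_H$. Condition \cF makes the cohomology of every stratum of finite type over $R$, so the localization long exact sequences attached to this finite stratification (ordering the strata by stabilizer size) are $g$-equivariant sequences of finitely generated $R$-modules; taking alternating sums of traces yields the additivity
\[
  \calL_\Phi(g,\calE_{|X^g};F) = \sum_{H \ni g} \calL_\Phi(g,\calE_{|X_H};F),
\]
the sum running over all subgroups $H \leq G$ with $g \in H$.

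I would then compute the trace of the right-hand side. Put $\xi_H := \chi_\Phi(X_H,\calE_{|X_H};F[H])$, so that $\tr(h)(\xi_H) = \calL_\Phi(h,\calE_{|X_H};F)$ for $h \in H$. The Frobenius formula for an induced character gives
\[
  \tr(g)\bigl(\ind_H^G \xi_H\bigr)
  = \frac{1}{|H|}\sum_{\substack{x \in G \\ x^{-1}gx \in H}} \calL_\Phi\bigl(x^{-1}gx,\calE_{|X_H};F\bigr).
\]
For each such $x$ the translation $y \mapsto xy$ is a $\Phi$-preserving homeomorphism $X_H \isomorph X_{xHx^{-1}}$ which, via the $G$-equivariant structure of $\calE$, intertwines the action of $x^{-1}gx$ on $X_H$ with that of $g$ on $X_{xHx^{-1}}$; hence $\calL_\Phi(x^{-1}gx,\calE_{|X_H};F) = \calL_\Phi(g,\calE_{|X_{xHx^{-1}}};F)$. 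As $x$ runs over $\{x : x^{-1}gx \in H\}$ the conjugate $xHx^{-1}$ runs over the members of $[H]$ containing $g$, each arising from exactly $|N(H)|$ values of $x$. Therefore
\[
  \frac{|H|}{|N(H)|}\,\tr(g)\bigl(\ind_H^G \xi_H\bigr)
  = \sum_{\substack{H' \in [H] \\ g \in H'}} \calL_\Phi\bigl(g,\calE_{|X_{H'}};F\bigr),
\]
and summing over $[H] \in \cla(G)$ collapses the double sum to $\sum_{H' \ni g}\calL_\Phi(g,\calE_{|X_{H'}};F)$, which is precisely the additivity expression above. Thus $\tr(g)$ agrees on both sides for every $g$, proving the first equality.

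For the second equality I would decompose each $\xi_H$ into isotypic parts. Because $H$ acts trivially on $X_H$, the sheaf $\calE_{|X_H}$ is a sheaf of $R[H]$-modules and, for an $H$-stable lattice $M_\lambda \subset V_\lambda$, one has $\Hom_{R[H]}(M_\lambda,\calE_{|X_H}) = (M_\lambda^{\vee} \otimes_R \calE_{|X_H})^H$. As $M_\lambda$ is finitely generated projective over the Dedekind ring $R$, tensoring with $M_\lambda^{\vee}$ commutes with cohomology, and forming $H$-invariants becomes exact after $\otimes_R F$; this produces a natural isomorphism
\[
  H^j_\Phi\bigl(X_H,\Hom_{R[H]}(M_\lambda,\calE_{|X_H})\bigr)\otimes_R F
  \;\cong\;
  \Hom_{F[H]}\bigl(V_\lambda,\,H^j_\Phi(X_H,\calE_{|X_H})\otimes_R F\bigr).
\]
Taking alternating sums of $F$-dimensions identifies $e(\lambda)$ with $\deg(V_\lambda)$ times the multiplicity of $[V_\lambda]$ in $\xi_H$, so that $\xi_H = \sum_{\lambda \in \widehat{H}_F} \tfrac{e(\lambda)}{\deg(V_\lambda)}[V_\lambda]$; here the finiteness in Condition \cF together with $R$ being Noetherian guarantees that $e(\lambda)$ is a well-defined integer independent of the chosen lattice. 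Substituting this into the first equality and using linearity of $\ind_H^G$ gives the second.

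The genuine depth lies in the single-element fixed-point formula invoked at the outset: proving $\calL_\Phi(g,\calE;F) = \calL_\Phi(g,\calE_{|X^g};F)$ for an arbitrary Hausdorff space with a paracompactifying family of supports and merely finite-type (possibly infinite-rank) cohomology is a Brown--Verdier-type localization in which the finite cohomological $\Phi$-dimension, the flatness of $\calE$, and Condition \cF are all indispensable. Granting that input, the only remaining point that requires care is the additivity of the Lefschetz number over the orbit-type stratification, which presupposes the finite-type conclusion of Condition \cF on every stratum; the subsequent passage from the fixed-point description to the sum of induced characters is the elementary normalizer count carried out above.
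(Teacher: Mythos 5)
Your argument is circular at its central step. The identity $\calL_\Phi(g,\calE;F)=\calL_\Phi(g,\calE_{|X^g};F)$ that you invoke at the outset as ``the single-element Lefschetz fixed-point formula'' is not an available black box here: for cyclic $G=\langle g\rangle$ it is exactly the Corollary that the paper deduces \emph{from} the theorem, and extending it from the classical settings (Verdier: compact supports and constant coefficients; Brown/Zarelua similarly) to paracompactifying families of supports and $G$-sheaves of flat $R$-modules is the whole point of the paper. Equivalently, what you are assuming is the vanishing of $\calL_\Phi(g,\calE;F)$ for $g\neq 1$ acting freely, and that is where all the work lies: the paper proves it by reducing modulo $p$ (Lemma \ref{lem:EulerCharModp}), invoking Floyd's theorem from Smith theory to get $\chi_\Phi(X,\calE;F)=|G|\,\chi_\Phi(X/G,\pi_*^G\calE;F)$ for free actions (Lemma \ref{lem:EulerCharCovering}), and then twisting by characters to show that all eigensheaves of $\pi_*\calE$ have equal Euler characteristic, whence the Lefschetz numbers vanish and $\chi_\Phi(X,\calE;F[G])$ is a multiple of the regular representation (Lemma \ref{lem:VerdierLemma}). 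None of this appears in your proposal; calling it ``a Brown--Verdier-type localization'' in which the hypotheses ``are all indispensable'' acknowledges the gap rather than closing it. A telling symptom is that your proof of the first equality never uses condition \cFtwo, whereas in the paper \cFtwo is precisely what licenses applying the free-action lemma to the sheaves $\Hom_{R[H]}(L_\lambda,\calE_{|X_H})$ on which $N(H)/H$ acts freely; a proof of the first equality that gets by on \cFone alone cannot be complete.

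The surrounding bookkeeping is correct and is a legitimate reorganization of the paper's: the reduction of the identity in $\Gzero(F[G])$ to an identity of traces, the additivity of Lefschetz numbers over the orbit-type stratification via the long exact sequences of pairs, the normalizer count showing that $\tfrac{|H|}{|N(H)|}\tr(g)\bigl(\ind_H^G\chi_\Phi(X_H,\calE_{|X_H};F[H])\bigr)$ equals the sum of $\calL_\Phi(g,\calE_{|X_{H'}};F)$ over the conjugates $H'$ of $H$ containing $g$, and the isotypic decomposition yielding the second displayed equality all check out (the paper instead tests against $\Hom_{F[N]}(V_\lambda,-)$ and uses Frobenius reciprocity, but the two computations are equivalent). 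To complete the proof you must supply the missing core: either the chain of Lemmas \ref{lem:EulerCharModp}--\ref{lem:VerdierLemma}, or an independent argument that a nontrivial element acting freely has vanishing Lefschetz number on $\com{H}_\Phi(X,\calE)\otimes_R F$ under condition \cF.
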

A proof of the theorem will be given in the next section.

\begin{corollary}\label{cor:letg}
       Let $G$ be the finite cyclic group generated by an element $g$. Under the assumptions of the theorem we obtain
       an equality of Lefschetz numbers
       \begin{equation*}
          \calL_\Phi\bigl(g,\calE; F\bigr) = \calL_{\Phi}\bigl(g, \calE_{|X^G}; F \bigr).
       \end{equation*}
   \end{corollary}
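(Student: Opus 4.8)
The plan is to deduce the corollary directly from the first displayed equality of the Theorem by applying the trace homomorphism $\tr(g)\colon \Gzero(F[G]) \to R$, which by definition sends $\chi_\Phi(X,\calE; F[G])$ to the Lefschetz number $\calL_\Phi(g,\calE; F)$. First I would record the structural simplifications forced by the hypothesis that $G = \langle g\rangle$ is cyclic, hence abelian: every subgroup $H$ is normal, so $N(H) = G$ and the factor $\frac{|H|}{|N(H)|}$ becomes $\frac{|H|}{|G|}$, and each conjugacy class $[H] \in \cla(G)$ is a singleton. Thus applying $\tr(g)$ to
\[
\chi_\Phi\bigl(X,\calE; F[G]\bigr) = \sum_{[H] \in \cla(G)} \frac{|H|}{|N(H)|}\, \ind_H^G\Bigl(\chi_\Phi\bigl(X_H, \calE_{|X_H}; F[H]\bigr)\Bigr)
\]
reduces the problem to evaluating $\tr(g)$ on each induced class, the sum now ranging over all subgroups $H \le G$.

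The key computation is the induced-character formula. For an $F[H]$-module $M$ one has $\tr\bigl(g \mid \ind_H^G M\bigr) = \frac{1}{|H|}\sum_{x \in G,\; x^{-1}gx \in H} \tr\bigl(x^{-1}gx \mid M\bigr)$, and since $G$ is abelian every conjugate $x^{-1}gx$ equals $g$, so this collapses to $\frac{|G|}{|H|}\tr(g \mid M)$ when $g \in H$ and to $0$ otherwise. Now I would invoke the defining property $G = \langle g\rangle$: the only subgroup of $G$ containing $g$ is $G$ itself, so every term with $H \ne G$ vanishes. The single surviving term $H = G$ carries coefficient $\frac{|G|}{|N(G)|}\cdot\frac{|G|}{|G|} = 1$ and $\ind_G^G$ is the identity, which yields $\calL_\Phi(g,\calE; F) = \tr\bigl(g \mid \chi_\Phi(X_G, \calE_{|X_G}; F[G])\bigr) = \calL_\Phi\bigl(g, \calE_{|X_G}; F\bigr)$.

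The final step is an identification of spaces: a point lies in $X^G$ exactly when its stabilizer contains $G$, and since the stabilizer is a subgroup of $G$ this forces the stabilizer to equal $G$, so $X_G = X^G$. Substituting this into the identity above gives the claim. I expect no serious obstacle beyond the Theorem itself, as the corollary is a clean specialization; the only point demanding care is the interplay of $\tr(g)$ with induction, namely correctly applying the induced-character formula and observing that commutativity replaces every conjugate of $g$ by $g$. Once this is in place, the generation property $G = \langle g\rangle$ does the remaining work by eliminating every subgroup other than $G$.
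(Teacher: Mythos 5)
Your proof is correct and follows essentially the same route as the paper: apply $\tr(g)$ to the first identity of the Theorem, use the vanishing of $\tr(g\mid\ind_H^G V)$ for all proper subgroups $H$ (which for $G=\langle g\rangle$ are exactly the subgroups not containing $g$), and conclude with $X_G=X^G$. The paper merely states these two facts without the explicit induced-character computation you supply.
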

\begin{proof}
 Use  that for
 $ G = \langle g \rangle $ we have $ \tr (g | \ind^G_H V) = 0 \:$ for all finite dimensional $ F[H]$-modules $V$ if
 $ H \neq G $ and that $ X_G = X^G $.
\end{proof}

\section{Proof of the  Theorem}

This section is devoted to the proof of the  theorem.
We begin with the following general Lemma.
\begin{lemma}\label{lem:EulerCharModp}
 Let $p$ be a prime number.
 Let $\calE$ be a sheaf of abelian groups on $X$ and assume that the stalks of $\calE$ have no $p$-torsion.
 If the triple $(X,\Phi,\calE)$ is of finite type (w.r.t.~$\bbZ$) then the triple
 $(X,\Phi,\calE\otimes_\bbZ \bbF_p)$ is of finite type w.r.t.~$\bbF_p$.
 Moreover we have
 \begin{equation*}
      \chi_\Phi(X,\calE; \bbQ) =  \chi_\Phi(X,\calE\otimes_\bbZ \bbF_p; \bbF_p).
 \end{equation*}
Here $\bbF_p$ denotes the finite field with $p$ elements.
\end{lemma}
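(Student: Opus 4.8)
The plan is to relate the integral cohomology of $\calE$ to the $\bbF_p$-cohomology of $\calE \otimes_\bbZ \bbF_p$ through a Bockstein-type long exact sequence and then carry out the bookkeeping of ranks against torsion. First I would observe that, since the stalks of $\calE$ carry no $p$-torsion, multiplication by $p$ is injective on every stalk and hence injective as a morphism of sheaves. Its cokernel has stalks $\calE_x/p\calE_x = \calE_x \otimes_\bbZ \bbF_p$, so it is canonically the sheaf $\calE \otimes_\bbZ \bbF_p$, and we obtain a short exact sequence of sheaves
\begin{equation*}
0 \longrightarrow \calE \stackrel{p}{\longrightarrow} \calE \longrightarrow \calE \otimes_\bbZ \bbF_p \longrightarrow 0 .
\end{equation*}

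Next I would feed this into the long exact sequence of $\Phi$-cohomology, regarded as the derived functors of the $\Phi$-supported sections. Splitting off the kernels and cokernels of multiplication by $p$ yields, for every $j$, the short exact sequences
\begin{equation*}
0 \longrightarrow H^j_\Phi(X, \calE)/p\,H^j_\Phi(X,\calE) \longrightarrow H^j_\Phi(X, \calE \otimes_\bbZ \bbF_p) \longrightarrow H^{j+1}_\Phi(X, \calE)[p] \longrightarrow 0 ,
\end{equation*}
where $A[p]$ denotes the $p$-torsion subgroup. Because $\com{H}_\Phi(X, \calE)$ is a finitely generated abelian group by hypothesis, each $H^j_\Phi(X,\calE)/p$ and each $H^{j+1}_\Phi(X,\calE)[p]$ is a finite-dimensional $\bbF_p$-vector space that vanishes for all but finitely many $j$; the displayed sequence then forces the same for $H^j_\Phi(X, \calE \otimes_\bbZ \bbF_p)$, which establishes that $(X,\Phi,\calE\otimes_\bbZ \bbF_p)$ is of finite type with respect to $\bbF_p$.

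For the Euler characteristic I would invoke the elementary fact that for a finitely generated abelian group $A$ one has $\dim_{\bbF_p} A/pA - \dim_{\bbF_p} A[p] = \dim_\bbQ(A \otimes_\bbZ \bbQ)$: the free part contributes its rank to $A/pA$ and nothing to $A[p]$, while each finite cyclic $p$-summand contributes equally to both. Writing $a_j = \dim_{\bbF_p} H^j_\Phi(X,\calE)/p$ and $b_j = \dim_{\bbF_p} H^j_\Phi(X,\calE)[p]$, the short exact sequences give $\dim_{\bbF_p} H^j_\Phi(X, \calE \otimes_\bbZ \bbF_p) = a_j + b_{j+1}$. Summing with signs over the finitely many nonzero degrees, the torsion terms telescope and cancel:
\begin{equation*}
\sum_j (-1)^j (a_j + b_{j+1}) = \sum_j (-1)^j (a_j - b_j) = \sum_j (-1)^j \dim_\bbQ\bigl(H^j_\Phi(X,\calE)\otimes_\bbZ \bbQ\bigr) ,
\end{equation*}
which is precisely the asserted equality $\chi_\Phi(X, \calE \otimes_\bbZ \bbF_p; \bbF_p) = \chi_\Phi(X, \calE; \bbQ)$.

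The only genuinely delicate point is the first step: confirming that the sequence of sheaves is exact, and in particular that the cokernel of $p$ really is the tensor-product sheaf $\calE \otimes_\bbZ \bbF_p$ and not merely something agreeing with it after sheafification. This is where the no-$p$-torsion hypothesis is indispensable, since it is exactly what makes $p$ injective stalkwise; once that is in hand, everything that follows is the standard homological algebra of a two-term complex combined with the rank–torsion identity above.
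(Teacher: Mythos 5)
Your proof is correct and follows essentially the same route as the paper: the multiplication-by-$p$ short exact sequence of sheaves, the resulting Bockstein short exact sequences $0 \to H^j_\Phi(X,\calE)/p \to H^j_\Phi(X,\calE\otimes_\bbZ\bbF_p) \to H^{j+1}_\Phi(X,\calE)[p] \to 0$, and the telescoping rank--torsion count. The paper merely phrases the bookkeeping via an explicit decomposition $H^i \cong \bbZ^{b_i}\oplus P^i \oplus T^i$ rather than your identity $\dim_{\bbF_p}A/pA - \dim_{\bbF_p}A[p] = \operatorname{rank}A$, which is an equivalent computation.
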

\begin{proof}
 Since $\calE$ is torsion-free, there is a short exact sequence of sheaves on $X$:
 \begin{equation*}
    0 \longrightarrow \calE \stackrel{p}{\longrightarrow} \calE \longrightarrow \calE\otimes_\bbZ \bbF_p \longrightarrow 0.
 \end{equation*}
  Consider the associated long exact sequence. We deduce that $(X,\Phi,\calE\otimes_\bbZ \bbF_p)$ is of finite type.
  Further, write $H^i_\Phi(X,\calE) \cong \bbZ^{b_i} \oplus P^i \oplus T^i$, where $P^i$ is the subgroup of elements whose order is a power of $p$
  and $T^i$ is the subgroup of elements of finite order prime to $p$.
  Let $P^i_p$ denote the elements of order exactly $p$ and let $r_i = \dim_{\bbF_p} P^i_p$.
  From the long exact sequence we obtain short exact sequences
  \begin{equation*}
     0 \longrightarrow H^i_\Phi(X,\calE)\otimes_\bbZ\bbF_p \longrightarrow H^i_\Phi(X,\calE\otimes\bbF_p)\longrightarrow P^{i+1}_p \longrightarrow 0
  \end{equation*}
  for every degree $i$. Thus $\dim_{\bbF_p} H^i_\Phi(X,\calE\otimes\bbF_p) = b_i + r_i + r_{i+1}$ and the second assertion follows
  via alternating summation.
\end{proof}

 Let $\pi: X \to X/G$ be the canonical projection.
 Note that for a sheaf of abelian groups $\calE$ on $X$ there is a canonical isomorphism
\begin{equation*}
  \com{H}_\Phi(X,\calE) \isomorph \com{H}_\Phi(X/G,\pi_*(\calE)).
\end{equation*}

  In general, if $\calF$ is a sheaf of $R[G]$-modules on a space $Y$, then we write $\calF^G$ for the subsheaf of $G$-stable sections,
  i.e.~$\calF^G(U) = \calF(U)^G$.
  Let $\calE$ be a $G$-sheaf of $R$-modules on $X$. We write $\pi_*^G(\calE)$ for $\pi_*(\calE)^G$.
  Note that the
  triple $(X,\Phi, \calE)$ is of finite type w.r.t.~$R$ if and only if it is of finite type w.r.t.~$\bbZ$.
  In this case we simply say that $(X,\Phi, \calE)$ is of finite type.

\begin{lemma}\label{lem:EulerCharCovering}
   Suppose that $G$ is abelian and acts freely on $X$. Let $\calE$ be a flat $G$-sheaf of $R$-modules on $X$ such that
   $(X,\Phi, \calE)$ is of finite type. In this case $(X/G, \Phi, \pi_*^G(\calE))$ is of finite type and
   \begin{equation*}
       \chi_\Phi(X,\calE; F) = |G| \chi_\Phi(X/G, \pi_*^G(\calE); F).
   \end{equation*}
\end{lemma}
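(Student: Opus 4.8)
The plan is to reduce the statement to the multiplicativity of the Euler--Poincar\'e characteristic under the finite covering $\pi\colon X\to X/G$. Since $G$ is finite and acts freely on the Hausdorff space $X$, the projection $\pi$ is a covering map: for $x\in X$ the $|G|$ points of the orbit $Gx$ can be separated pairwise by the Hausdorff property, and intersecting finitely many of the resulting neighbourhoods produces an open $U\ni x$ with $gU\cap U=\emptyset$ for $g\neq e$. In particular $\pi_*$ is exact with $(\pi_*\calF)_{\bar y}=\bigoplus_{x\in\pi^{-1}(\bar y)}\calF_x$, and the canonical isomorphism $\com{H}_\Phi(X,\calE)\isomorph\com{H}_\Phi(X/G,\pi_*\calE)$ recalled before this lemma is one of $R[G]$-modules. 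First I would invoke descent along the free action: the $G$-sheaf $\calE$ descends, i.e.\ $\pi^*\mathcal{G}\cong\calE$ for $\mathcal{G}:=\pi_*^G(\calE)$, and the projection formula for the finite map $\pi$ gives $\pi_*\calE=\pi_*\pi^*\mathcal{G}\cong\mathcal{G}\otimes_R\mathcal{L}$, where $\mathcal{L}:=\pi_*(\underline{R})$ is a local system of free $R$-modules of rank $|G|$ (fibrewise $R$-valued functions on the covering, with its permutation $G$-action). Thus the assertion becomes the multiplicativity statement $\chi_\Phi(X/G,\mathcal{G}\otimes_R\mathcal{L};F)=|G|\,\chi_\Phi(X/G,\mathcal{G};F)$.

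Next I would pass to a finite field. As $\calE$ is flat over $R$ it is torsion-free, and so are $\mathcal{G}$ and $\pi_*\calE\cong\mathcal{G}\otimes_R\mathcal{L}$; hence by Lemma \ref{lem:EulerCharModp} it suffices to prove the corresponding identity for the reductions $\otimes_\bbZ\bbF_p$ for a single conveniently chosen prime $p$. I would pick $p\nmid|G|$ with $\bbF_p$ a splitting field for $G$ (infinitely many such $p$ exist, and Lemma \ref{lem:EulerCharModp} is available for every prime). Writing $\mathcal{G}_p=\mathcal{G}\otimes\bbF_p$ and $\mathcal{L}_p=\mathcal{L}\otimes\bbF_p$, the group algebra $\bbF_p[G]$ is split semisimple, and since $G$ is abelian the permutation local system decomposes as $\mathcal{L}_p\cong\bigoplus_{\chi\in\widehat G}\mathcal{L}_\chi$ into the rank-one local systems attached to the characters $\chi\colon G\to\bbF_p^\times$, the trivial character contributing $\mathcal{L}_{\mathbf 1}\cong\underline{\bbF_p}$. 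By additivity of the Euler characteristic this reduces the claim to showing, for each $\chi$, that
\begin{equation*}
\chi_\Phi\bigl(X/G,\mathcal{G}_p\otimes_{\bbF_p}\mathcal{L}_\chi;\bbF_p\bigr)=\chi_\Phi\bigl(X/G,\mathcal{G}_p;\bbF_p\bigr),
\end{equation*}
i.e.\ that twisting by a rank-one local system does not change the Euler characteristic.

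This twist-invariance is the step I expect to be the main obstacle. The naive argument --- resolve $\mathcal{G}_p$ by a bounded complex of $\Phi$-soft sheaves $\mathcal{S}^\bullet$ (which exists because finite type forces the cohomology to be bounded), observe that $\mathcal{S}^\bullet\otimes_{\bbF_p}\mathcal{L}_\chi$ is again $\Phi$-soft and resolves $\mathcal{G}_p\otimes\mathcal{L}_\chi$ since $\mathcal{L}_\chi$ is locally free, and compare Euler characteristics of the two complexes of global sections --- fails as stated, because the groups $\Gamma_\Phi(\mathcal{S}^j\otimes\mathcal{L}_\chi)$ are infinite dimensional and carry no termwise Euler characteristic. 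The content is therefore genuinely that of multiplicativity of the $\Phi$-supported Euler characteristic in a finite covering, and I would establish it by exploiting that $\mathcal{L}_\chi$ is \emph{locally} trivial: over any open set on which $\mathcal{L}_\chi$ is constant one has $\mathcal{S}^\bullet\otimes\mathcal{L}_\chi\cong\mathcal{S}^\bullet$, so the local Euler characteristics agree, and one assembles the global identity from these local models, the delicate point being the bookkeeping of the paracompactifying family $\Phi$ across the gluing. (Equivalently, one trivialises $\mathcal{L}_\chi$ on the associated cyclic cover and transports the computation, taking care that the comparison does not become circular.)

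Finally I would reassemble: summing the twist-invariance over the $|G|$ characters $\chi$ gives $\chi_\Phi(X/G,\mathcal{G}_p\otimes\mathcal{L}_p;\bbF_p)=|G|\,\chi_\Phi(X/G,\mathcal{G}_p;\bbF_p)$, and undoing the reduction via Lemma \ref{lem:EulerCharModp} yields $\chi_\Phi(X,\calE;F)=|G|\,\chi_\Phi(X/G,\pi_*^G(\calE);F)$. The finite type of $(X/G,\Phi,\pi_*^G(\calE))$ I would deduce from the maps $\underline{R}\hookrightarrow\mathcal{L}\twoheadrightarrow\underline{R}$ (constant functions, respectively summation over the fibre) whose composite is multiplication by $|G|$: after inverting $|G|$ these exhibit $\mathcal{G}$ as a direct summand of $\pi_*\calE$, so $\com{H}_\Phi(X/G,\mathcal{G})$ is a summand of the finite type module $\com{H}_\Phi(X,\calE)$, which is what is needed for the Euler characteristic to be defined; finite generation over $R$ then follows.
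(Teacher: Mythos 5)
Your strategy founders at exactly the step you flag as the ``main obstacle'': the claim that tensoring $\mathcal{G}_p$ with a rank-one local system $\mathcal{L}_\chi$ leaves the $\Phi$-supported Euler characteristic unchanged. You propose to prove this from the local triviality of $\mathcal{L}_\chi$ by assembling local models, but no such argument can work, because the statement is false for general sheaves of vector spaces and hence cannot follow from local data alone. The paper's own counterexample in Section~\ref{sec:Comments} makes this concrete: for $X=S^1$ with $G=\bbZ/2\bbZ$ acting by rotation there is a $G$-sheaf $\calE_F$ of $F$-vectorspaces with finite-dimensional cohomology and $\chi(S^1,\calE_F)=1$; by descent and the projection formula (the same identifications you use) $\pi_*\calE_F\cong\mathcal{G}_F\oplus\bigl(\mathcal{G}_F\otimes\mathcal{L}_{\chi_0}\bigr)$ with $\mathcal{L}_{\chi_0}$ the nontrivial rank-one local system on $X/G$, and since the total Euler characteristic is odd the two summands have \emph{different} Euler characteristics. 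So twist-invariance, while true for the sheaves you actually need it for, is there a consequence of the integral structure (torsion-free, finite type over $R$) and is essentially equivalent to the lemma itself; indeed the paper \emph{deduces} the equality of the Euler characteristics of the eigensheaves $\pi_*(\calE)_\psi$ from this lemma, so proving the lemma from that equality is circular unless you give an independent argument, which the proposal does not contain. The paper's actual mechanism is of a different nature: reduce to $G$ cyclic of prime order $p$, reduce the sheaf modulo that \emph{same} prime $p$ via Lemma~\ref{lem:EulerCharModp} (so $p$ divides $|G|$, $\bbF_p[G]$ is not semisimple, and no character decomposition is available), and invoke the Smith--Floyd theorem $\chi(X;\bbF_p)=p\,\chi(X/G;\bbF_p)$ for free $\bbZ/p$-actions on spaces of finite mod-$p$ cohomological dimension. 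That input is the real content of the lemma and is absent from your proposal.

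A secondary gap: your argument for the finite-type assertion only shows that the composite $\mathcal{G}\to\pi_*\calE\to\mathcal{G}$ is multiplication by $|G|$, whence $|G|\cdot\com{H}_\Phi(X/G,\mathcal{G})$ is finitely generated; this does not yield finite generation of $\com{H}_\Phi(X/G,\mathcal{G})$ itself, since the $|G|$-torsion could a priori be infinitely generated (compare $M=(\bbZ/p)^{(\bbN)}$ with $pM=0$). It does suffice to define the Euler characteristic over $F$, but not to prove the finite-type statement in the lemma. The paper instead uses the Grothendieck spectral sequence $H^p\bigl(G,H^q_\Phi(X,\calE)\bigr)\Rightarrow H^{p+q}_\Phi\bigl(X/G,\pi_*^G(\calE)\bigr)$ together with the finiteness of the $\Phi$-dimension of $X/G$.
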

\begin{proof}
   First note that $X/G$ has finite $\Phi$-dimension, since cohomological dimension
   is a local property (cf.~ II.~4.14.1 in \cite{Godement1958}) and $\pi$ is a covering map.
   Further, the triple $(X/G, \Phi , \pi_*^G(\calE))$ is of finite type due to the Grothendieck spectral sequence
    \begin{equation*}
       H^p(G, H^q_\Phi(X,\calE)) \implies H^{p+q}_\Phi(X/G,\pi_*^G(\calE))
    \end{equation*}
   which can be obtained for paracompactifying supports just as in \cite[Thm.~5.2.1]{Grothendieck1957}.

   Now we prove the assertion about the Euler characteristic.
   It is easy to check that $[F:\bbQ] \chi_\Phi(X, \calE; F) = \chi_\Phi(X, \calE; \bbQ)$, hence we can assume $R = \bbZ$.
   By induction on the group structure we can assume that $G$ is finite cyclic of prime order $p$.
   The assertion follows from Lemma \ref{lem:EulerCharModp} and a Theorem of E.~E.~Floyd (based on a result of P. A. Smith),
   see \cite[Thm.~19.7]{Bredon1997} or \cite[Thm.~4.2]{Floyd1952}. Here we use that $\pi_*^G(\calE)\otimes_\bbZ \bbF_p = \pi_*^G(\calE\otimes_\bbZ \bbF_p)$.
  Note that we assumed the $\Phi$-dimension of $X$ to be finite, which
   implies in particular that $\dim_{\Phi,\bbF_p} X$ is finite in the notation of \cite{Bredon1997}.
   Further the pull-back sheaf $\pi^* (\pi_*^G(\calE))$ is isomorphic to $\calE$ (see~\cite[p.~199]{Grothendieck1957}).
\end{proof}

  \begin{lemma}
     Let $G$ be a finite group which acts freely on $X$ and let $\calE$ be flat a $G$-sheaf of $R$-modules on $X$.
     We assume that $(X,\Phi, \calE)$ is of finite type. For any $g \in G$ with $g \neq 1$ the Lefschetz number vanishes.
  \end{lemma}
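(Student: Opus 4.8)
The plan is to show that the class $\chi_\Phi(X,\calE;F[\langle g\rangle])$ is an integer multiple of the regular representation of the cyclic group $\langle g\rangle$; since the character of the regular representation vanishes away from the identity, this forces $\calL_\Phi(g,\calE;F) = 0$. The guiding intuition is the classical case of a free cellular action, where the chain complex consists of free $\bbZ[G]$-modules, so its Euler characteristic already lives in the span of the regular representation. The tool replacing the chain-level argument here is Lemma \ref{lem:EulerCharCovering}, which I intend to apply not just to $\calE$ but to all of its character twists.

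First I would reduce to a cyclic group. The cohomology $H^*_\Phi(X,\calE)$ and the action of $g$ on it depend only on the subgroup $\langle g\rangle$, which again acts freely and for which $(X,\Phi,\calE)$ is still of finite type; hence I may assume $G = \langle g\rangle$ is cyclic of order $n = \mathrm{ord}(g) > 1$. Since $\tr(g)$ is insensitive to field extension, I would extend scalars to a splitting field $E = F(\zeta_n)$ with ring of integers $S$, and set $\calE_S = \calE\otimes_R S$, a flat $G$-sheaf of $S$-modules with $(X,\Phi,\calE_S)$ of finite type. Over $E$ the group $G$ has the $n$ one-dimensional characters $\psi_0,\dots,\psi_{n-1}$ with $\psi_l(g) = \zeta^l$, so $\chi_\Phi(X,\calE;E[G]) = \sum_{k} c_k[\psi_k]$ for integers $c_k$, and $\calL_\Phi(g,\calE;F) = \sum_k c_k\zeta^k$. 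It therefore suffices to prove that all the $c_k$ coincide.

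The key step is to realize each multiplicity as a twisted Euler characteristic on the quotient and to apply Lemma \ref{lem:EulerCharCovering} to the twist. For each $l$ let $\calE_S\langle l\rangle = \calE_S\otimes_S S_l$, where $S_l$ is the free rank-one $S$-module on which $g$ acts by $\zeta^l$; this is again a flat $G$-sheaf of $S$-modules, of finite type because it is isomorphic to $\calE_S$ as a sheaf of $S$-modules once the $G$-structure is forgotten. Since $G$ is abelian and acts freely, the Grothendieck spectral sequence $H^p(G,H^q_\Phi(X,-)) \Rightarrow H^{p+q}_\Phi(X/G,\pi_*^G(-))$ used in the proof of Lemma \ref{lem:EulerCharCovering} degenerates after $\otimes_S E$, because $H^p(G,-)$ vanishes in positive degrees on $E$-vector spaces. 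This identifies $H^q_\Phi(X/G,\pi_*^G(\calE_S\langle l\rangle))\otimes_S E$ with the $\psi_{-l}$-isotypic part of $H^q_\Phi(X,\calE)\otimes_R E$, so that after taking the alternating sum $\chi_\Phi(X/G,\pi_*^G(\calE_S\langle l\rangle);E) = c_{-l}$.

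On the other hand, Lemma \ref{lem:EulerCharCovering} applied to $\calE_S\langle l\rangle$ gives $\chi_\Phi(X,\calE_S\langle l\rangle;E) = n\cdot c_{-l}$, while the left-hand side equals $\chi_\Phi(X,\calE;F)$ because $\calE_S\langle l\rangle$ and $\calE_S$ have isomorphic underlying sheaves of $S$-modules. Thus $n\,c_{-l}$ is independent of $l$, all $c_k$ equal the common value $\chi_\Phi(X,\calE;F)/n$, and $\chi_\Phi(X,\calE;E[G])$ is a multiple of the regular representation; hence $\calL_\Phi(g,\calE;F) = (\chi_\Phi(X,\calE;F)/n)\sum_{k=0}^{n-1}\zeta^k = 0$ since $n>1$. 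The main obstacle is the bookkeeping of the third paragraph: setting up the character twists integrally over $S$, checking that they remain flat and of finite type, and justifying that after extending scalars to $E$ the quotient cohomology genuinely computes the isotypic multiplicities. Once this is in place, feeding each twist into Lemma \ref{lem:EulerCharCovering} is immediate.
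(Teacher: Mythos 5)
Your proof is correct and follows essentially the same route as the paper: after reducing to a cyclic group and adjoining roots of unity, both arguments twist $\calE$ by the characters of $G$, apply Lemma \ref{lem:EulerCharCovering} to each twist to conclude that all character multiplicities coincide, and then use $\sum_{\psi}\psi(g)=0$ for $g\neq 1$. The only cosmetic difference is that you justify the identification of the quotient cohomology with the isotypic components via the degenerating spectral sequence, where the paper phrases this as the eigensheaf decomposition $\pi_*^G(\calE\otimes\psi^{-1})=\pi_*(\calE)_\psi$.
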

  \begin{proof}
     By taking a finite extension we can assume without loss of generality that $R$ contains all $|G|$-th roots of unity.
     Further we can assume that $G$ is a finite cyclic group. Let $\psi: G \to R^\times$ be a character of $G$.
     We can twist the $G$-sheaf $\calE$ with the character $\psi^{-1}$ to obtain a new $G$-sheaf $\calE\otimes \psi^{-1}$.
     This sheaf is isomorphic to $\calE$ as a sheaf of $R$-modules, but not as $G$-sheaf.
     Further we find that $\pi_*^G(\calE\otimes\psi^{-1})$ is the $\psi$-eigensheaf $\pi_*(\calE)_\psi$ in the
     sheaf $\pi_*(\calE)$ of $R[G]$-modules, this means $\pi_*(\calE)_\psi$ is the subsheaf of
     sections of $\pi_*(\calE)$ which transform with $\psi$ under the action of~$G$. From Lemma \ref{lem:EulerCharCovering}
     we deduce that all the eigensheaves $\pi_*(\calE)_\psi$ have equal Euler characteristic.
     However, since
     \begin{equation*}
        \calL_\Phi(g,\calE; F) = \sum_{\psi \in \widehat{G}} \psi(g) \chi_\Phi(X/G, \pi_*(\calE)_\psi; F)
     \end{equation*}
     the claim follows.
  \end{proof}

We shall frequently use the following Lemma.

\begin{lemma}\label{lem:FixInside}
  Let $\calE$ be a sheaf of $R[G]$-modules on a space $X$ and let $\Phi$ be a system of supports on $X$.
  The inclusion $\calE^G \to \calE$ induces an isomorphism of vectorspaces
  \begin{equation*}
       \com{H}_\Phi(X,\calE^G) \otimes_R F \isomorph \com{H}_\Phi(X,\calE)^G \otimes_R F.
  \end{equation*}
\end{lemma}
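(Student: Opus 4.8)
The plan is to exploit the averaging (norm) endomorphism attached to the finite group $G$, which over $F$ becomes an explicit inverse to the map induced by the inclusion $\iota\colon \calE^G \hookrightarrow \calE$, up to the unit $|G|$.

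First I would introduce, alongside $\iota$, the norm morphism $N\colon \calE \to \calE^G$ defined on sections by $N(s) = \sum_{g\in G} g\cdot s$. This is an $R$-linear morphism of sheaves: it is $R$-linear because $R$ is central in $R[G]$, and its image lies in $\calE^G$ since $\sum_{g} g\cdot s$ is visibly $G$-fixed. Two elementary identities of sheaf morphisms govern the argument: the composite $N\circ\iota$ equals multiplication by $|G|$ on $\calE^G$, while $\iota\circ N$ equals the endomorphism $\sum_{g\in G} g$ of $\calE$.

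Next I apply the cohomology functor $\com{H}_\Phi(X,-)$, abbreviating $A = \com{H}_\Phi(X,\calE^G)$ and $B = \com{H}_\Phi(X,\calE)$, and obtaining $\iota^*\colon A \to B$ and $N^*\colon B \to A$. Because every $g\in G$ acts as the identity on $\calE^G$, the inclusion $\iota$ is $G$-equivariant for the trivial action on its source, so $g^*\circ\iota^* = \iota^*$ for all $g$ and the image of $\iota^*$ lands inside the invariants $B^G$; thus I may regard $\iota^*\colon A \to B^G$. By functoriality and additivity of the cohomology functor the two identities above translate into $N^*\circ\iota^* = |G|\cdot\id_A$ and $\iota^*\circ N^* = \sum_{g} g^*$. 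Restricting the latter to $B^G$, where each $g^*$ acts trivially, it becomes $|G|\cdot\id_{B^G}$. Hence $\iota^*$ and $N^*|_{B^G}$ are mutually inverse up to the scalar $|G|$.

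Finally I tensor with $F$. Since $F = \mathrm{Frac}(R)$ is flat over $R$ and $G$ is finite, forming $G$-invariants commutes with this base change: writing $B^G$ as the kernel of $b \mapsto (g\cdot b - b)_{g\in G}$, flat base change preserves both this kernel and the finite product, giving $B^G\otimes_R F \cong (B\otimes_R F)^G$. Over $F$ the integer $|G|$ is a unit, so $\iota^*\otimes\id_F$ and $\tfrac{1}{|G|}\,N^*\otimes\id_F$ are mutually inverse isomorphisms $A\otimes_R F \isomorph B^G\otimes_R F$, which is exactly the assertion. The only steps requiring any attention are the verification that $\iota^*$ really factors through the invariants and that $(-)^G$ commutes with $-\otimes_R F$; both are immediate consequences of the triviality of the $G$-action on $\calE^G$ and of the flatness of $F$ over $R$, so no serious obstacle arises.
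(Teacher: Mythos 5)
Your argument is correct, and it takes a genuinely different route from the paper. You build the proof on the norm morphism $N\colon \calE \to \calE^G$, $s \mapsto \sum_{g \in G} g\cdot s$, together with the identities $N\circ\iota = |G|\cdot\id_{\calE^G}$ and $\iota\circ N = \sum_{g\in G} g$; after applying $\com{H}_\Phi(X,-)$, observing that $\iota^*$ lands in the invariants, and inverting $|G|$ over $F$, the maps $\iota^*$ and $\tfrac{1}{|G|}N^*$ become mutually inverse, which is exactly the claim. The paper argues instead with the right derived functor of $B\colon \calE\mapsto\calE^G$ and a Grothendieck spectral sequence $H^p_\Phi(X,\RR B^q(\calE))\otimes_R F \Rightarrow H^{p+q}_\Phi(X,\calE)^G\otimes_R F$, which collapses because the stalk of $\RR B^q(\calE)$ at $x$ is the group cohomology $H^q(G,\calE_x)$, purely $|G|$-torsion for $q\geq 1$ and hence killed by $\otimes_R F$. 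Both proofs ultimately rest on the invertibility of $|G|$ in $F$; yours is the more elementary (no derived categories or spectral sequences, and it produces an explicit inverse), while the paper's fits the derived-functor machinery used elsewhere in the argument (compare Lemma \ref{lem:EulerCharCovering}) and pinpoints the obstruction as the torsion sheaves $\RR B^q(\calE)$. One minor remark: your digression on $(-)^G$ commuting with $-\otimes_R F$ is not actually needed, since the target in the statement is $\com{H}_\Phi(X,\calE)^G\otimes_R F$ and not $\bigl(\com{H}_\Phi(X,\calE)\otimes_R F\bigr)^G$; the step is correct, just superfluous.
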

\begin{proof}
   Consider the functor $B: \calE \mapsto \calE^G$ from the category $\Sh_X(R[G])$ of sheaves of $R[G]$-modules
   to the category $\Sh_X(R)$ of sheaves of $R$-modules. This functor is left exact and we consider its right derived functor
   \begin{equation*}
      \RR{B}: \Der^+(\Sh_X(R[G])) \to \Der^+(\Sh_X(R)).
   \end{equation*}
 Note that $B$ takes injective sheaves of $R[G]$-modules to
   flabby sheaves (see Corollary to Prop.~5.1.3 in \cite{Grothendieck1957}).
   As in Thm.~5.2.1 in \cite{Grothendieck1957} there is a convergent spectral sequence
   \begin{equation*}
       H^p_\Phi(X,\RR{B}^q(\calE))\otimes_R F \implies H^{p+q}_\Phi(X,\calE)^G\otimes_R F,
   \end{equation*}
    where we use that $F$ is a flat $R$-module.
    In fact, the stalk at $x \in X$ of $\RR{B}^q(\calE)$ is the group cohomology $H^q(G,\calE_x)$ which is purely $|G|$-torsion
    for all $q \geq 1$. Hence the spectral sequence collapses and the claim follows.
\end{proof}

  We obtain a refined version of Verdier's Lemma (cf.~\cite{Verdier1973}).
\begin{lemma}\label{lem:VerdierLemma}
   Let $G$ be a finite group which acts freely on $X$ and let $\calE$ be a flat $G$-sheaf of $R$-modules on $X$.
   We assume that $(X,\Phi, \calE)$ is of finite type.
   In this case we have
    \begin{equation*}
       \chi_\Phi(X, \calE; F[G]) = \chi_\Phi(X/G,\pi^G_*(\calE); F) \cdot F[G].
    \end{equation*}
 In particular, Lemma \ref{lem:EulerCharCovering} holds without the assumption that $G$ is abelian.
\end{lemma}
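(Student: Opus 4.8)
The plan is to show that the class $x := \chi_\Phi(X,\calE;F[G]) \in \Gzero(F[G])$ is an integer multiple of the class $[F[G]]$ of the regular representation, and then to pin down the multiple using $G$-invariants. First I would note that $x$ is well defined: since $(X,\Phi,\calE)$ is of finite type, $\com{H}_\Phi(X,\calE)\otimes_R F$ is a finite-dimensional graded $F[G]$-module. By the definition of the Lefschetz number we have $\tr(g)(x) = \calL_\Phi(g,\calE;F)$, so the foregoing Lemma on the vanishing of Lefschetz numbers for free actions gives $\tr(g)(x) = 0$ for every $g \neq 1$. Thus the virtual character of $x$ is supported on the identity.

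Next I would invoke elementary character theory over a field of characteristic zero. The irreducible $F$-characters $\chi_\lambda$, for $\lambda \in \widehat{G}_F$, are linearly independent, so the character homomorphism from $\Gzero(F[G])$ to the space of class functions on $G$ is injective and stays injective after $\otimes\,\bbQ$. The subspace of class functions vanishing off the identity is one-dimensional and is spanned by the character $\chi_{\mathrm{reg}}$ of $F[G]$. Hence $x = c\,[F[G]]$ for a unique $c$, a priori only in $\Gzero(F[G])\otimes\bbQ$. To see that $c$ is an integer — and to compute it — I would take $G$-invariants: the trivial representation occurs in $F[G]$ with multiplicity exactly one, since $\dim_F (F[G])^G = 1$. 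Therefore $c$ equals the multiplicity of the trivial representation in $x$, that is
\begin{equation*}
   c = \sum_j (-1)^j \dim_F\bigl(H^j_\Phi(X,\calE)\otimes_R F\bigr)^G,
\end{equation*}
which is manifestly an integer. As $\Gzero(F[G])$ is torsion-free, the equality $x = c\,[F[G]]$ then already holds integrally.

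It remains to identify $c$ with $\chi_\Phi(X/G,\pi^G_*(\calE);F)$. Using the canonical isomorphism $\com{H}_\Phi(X,\calE) \cong \com{H}_\Phi(X/G,\pi_*(\calE))$ together with Lemma \ref{lem:FixInside} applied to the sheaf $\pi_*(\calE)$ of $R[G]$-modules on $X/G$, I obtain
\begin{equation*}
   \com{H}_\Phi(X/G,\pi^G_*(\calE))\otimes_R F \;\cong\; \bigl(\com{H}_\Phi(X,\calE)\otimes_R F\bigr)^G ,
\end{equation*}
whence $\chi_\Phi(X/G,\pi^G_*(\calE);F) = c$ by the formula above. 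This yields the displayed identity $\chi_\Phi(X,\calE;F[G]) = \chi_\Phi(X/G,\pi^G_*(\calE);F)\cdot F[G]$. The ``in particular'' clause then follows by applying $\tr(1)$ to both sides, which gives $\chi_\Phi(X,\calE;F) = |G|\,\chi_\Phi(X/G,\pi^G_*(\calE);F)$, i.e.\ Lemma \ref{lem:EulerCharCovering} with the abelian hypothesis removed.

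I expect the only genuinely delicate point to be the passage from ``character supported at the identity'' to ``\emph{integer} multiple of the regular representation'': the integrality of $c$ is not forced by the character identity alone, and it is precisely the multiplicity-one occurrence of the trivial representation in $F[G]$ that supplies it (and simultaneously identifies $c$ via the invariants functor). Everything else is a bookkeeping assembly of the two preceding lemmas, so I would keep the argument short and cite them directly.
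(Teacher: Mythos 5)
Your proposal is correct and follows essentially the same route as the paper: vanishing of the Lefschetz numbers for $g \neq 1$ (the preceding lemma) forces $\chi_\Phi(X,\calE;F[G])$ to be a multiple of the regular representation, and the coefficient is identified as $\chi_\Phi(X/G,\pi^G_*(\calE);F)$ via the canonical isomorphism $\com{H}_\Phi(X,\calE)\cong\com{H}_\Phi(X/G,\pi_*(\calE))$ together with Lemma \ref{lem:FixInside}. Your explicit justification of the integrality of the coefficient through the multiplicity of the trivial representation is a point the paper leaves implicit, but it is the same argument.
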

\begin{proof}
   With the same argument as in Lemma \ref{lem:EulerCharCovering} we see that $(X/G, \Phi, \pi^G_*(\calE))$ is of finite type.
   It suffices to compute the Lefschetz numbers of all elements of $G$ and compare them with the right hand side.
   The vanishing of all Lefschetz numbers for $g \neq 1$ shows that $ \chi_\Phi(X, \calE; F[G])$ is a multiple of the regular representation.
   The coefficient is the Euler characteristic of the graded $F$-vectorspace $\com{H}_\Phi(X,\calE)^G\otimes_{R} F$.
   Since $\com{H}_\Phi(X,\calE) \cong \com{H}_\Phi(X/G,\pi_*(\calE))$, we can use Lemma \ref{lem:FixInside} to deduce the claim.
\end{proof}
\begin{remark}
  Verdier uses the projection formula, the finite tor-amplitude criterion and a famous theorem of Swan
  to obtain this lemma for cohomology with compact supports and constant coefficients. It is possible to extend his approach to the case 
  of families of supports
  using a suitable replacement for the projection formula, see~\cite{Kionke2012}.
\end{remark}

Finally we prove the main theorem. Recall that $X$ is a Hausdorff space with an action of a finite group $G$ and $\Phi$ is a $G$-invariant
paracompactifying system of supports.
  \begin{proof}[Proof of the Theorem]
   Note that for every subgroup $H$ of $G$ the space $Y = X^H$ (resp.~$Y =X_H$) has
   finite $\Phi$-dimension since $Y$ is (locally) closed and $\Phi$ is paracompactifying (cf.~II.~Rem.~4.14.1 in \cite{Godement1958}).
   By condition \cFone the triple
   $(X_H,\Phi, \calE_{|X_H})$ is of finite type for every $H \leq G$.
   For $i=1, \dots, |G|$ we define the closed set
   \begin{equation*}
      X^i = \bigcup_{\substack{H \leq G \\ |H|\geq i}} X^H.
   \end{equation*}
    Then $X^1 = X$ and $X^{|G|} = X^G$ is the set of fixed points.
    An element $x\in X$ is in $X^i\setminus X^{i+1}$ exactly if it has an isotropy group with
    $i$ elements, hence
    \begin{equation*}
       X^i\setminus X^{i+1} = \bigcup_{\substack{H \leq G \\ |H|=i}} X_H =: \bigcup_{\substack{C \in \cla(G)_i}} X_C
    \end{equation*}
     where $\cla(G)_i$ is the set of conjugacy classes of subgroups with $i$ elements.
    Note that these unions are topologically disjoint. Using the long exact sequences of the pairs $(X^i,X^{i+1})$
    with supports in $\Phi$ (cf.~II.~4.10.1 in \cite{Godement1958}) we obtain
    \begin{equation*}
        \chi_\Phi\bigl(X,\calE; F[G]\bigr) = \sum_{C \in \cla(G)} \chi_{\Phi}\bigl(X_C, \calE_{|X_C}; F[G]\bigr).
    \end{equation*}
    Since $X_C$ is the disjoint union $\bigcup_{H \in C} X_H$ and $G$ acts transitively on the components
    we obtain
    \begin{equation*}
        \chi_{\Phi}\bigl(X_C, \calE_{|X_C}; F[G]\bigr) = \ind^G_{N_G(H)}\chi_{\Phi}\bigl(X_H, \calE_{|X_H}; F[N_G(H)]\bigr)
    \end{equation*}
    for any representative $H \in C$.
    We are now in the specific situation where $H$ acts trivially on $X_H$ and $N_G(H)/H$ acts freely on $X_H$.
     For simplicity we write $N$ for the normalizer $N_G(H)$.
  We prove the following identity
  \begin{equation*}
      \chi_{\Phi}\bigl(X_H, \calE_{|X_H}; F[N]\bigr) = \frac{|H|}{|N|} \ind_H^N\Bigl( \chi_\Phi(X_H, \calE_{|X_H}; F[H]) \Bigr) .
  \end{equation*}

   By Frobenius reciprocity a finite dimensional $F[N]$-module $V$ is induced from the $F[H]$-module $W$
   if and only if
  \begin{equation*}
      \dim_F \Hom_{F[N]}(V_\lambda, V) = \dim_F \Hom_{F[H]}\bigl((V_\lambda)_{|H}, W\bigr)
  \end{equation*}
   for all $\lambda \in \widehat{N}_F$. We use this principle in the Euler characteristic.
   For $\lambda \in \widehat{N}_F$ we choose some lattice $L_\lambda \subset V_\lambda$ as in condition \cFtwo.
   We obtain the $N$-sheaf $\Hom_R(L_\lambda, \calE_{|X_H})$ and the $N/H$-sheaf $\Hom_{R[H]}(L_\lambda, \calE_{|X_H})$.
   For simplicity denote the canonical map $X_H \to X_H/N$ by $\pi$ as well.
   Now we  obtain
   \begin{align*}
       \chi\Bigl( \Hom_{F[N]}\bigl(V_\lambda, \com{H}_\Phi(X_H, &\calE_{|X_H})\otimes_R F\bigr) \Bigr) \\
      &= \chi\Bigl( \Hom_{F[N]}\bigl(V_\lambda, \com{H}_\Phi(X_H/N,\pi_*(\calE_{|X_H}))\otimes_R F\bigr) \Bigr) \\
    &= \chi\Bigl(  \com{H}_\Phi(X_H/N,\Hom_R\bigl(L_\lambda, \pi_*(\calE_{|X_H})\bigr))^N\otimes_R F \Bigr) \\
    (\text{by Lemma \ref{lem:FixInside}})\quad &= \chi\Bigl(  \com{H}_\Phi(X_H/N,\pi^N_*\Hom_R\bigl(L_\lambda, \calE_{|X_H} \bigr))\otimes_R F) \Bigr)\\
   &= \chi\Bigl(  \com{H}_\Phi(X_H/N,\pi^{N/H}_*\Hom_{R[H]}\bigl(L_\lambda, \calE_{|X_H} \bigr))\otimes_R F) \Bigr)\\
    (\text{by Lemma \ref{lem:VerdierLemma} and \cFtwo}) \quad &= \frac{|H|}{|N|}\chi\Bigl( \com{H}_\Phi(X_H,\Hom_{R[H]}\bigl(L_\lambda, \calE_{|X_H} \bigr))\otimes_R F) \Bigr)\\
    (\text{by Lemma \ref{lem:FixInside}}) \quad &= \frac{|H|}{|N|}\chi\Bigl( \Hom_{F[H]}\bigl((V_\lambda)_{|H},\com{H}_\Phi(X_H, \calE_{|X_H} )\otimes_R F\bigr) \Bigr).
   \end{align*}

This proves the first equality in the theorem.
Next we decompose the $F[H]$-module $\com{H}_\Phi (X_H, \calE_{|X_H} ) \otimes_R  F$
into isotypical components. Let $V_\lambda$, with $\lambda \in \widehat{H}_F $, be an irreducible module and choose
some $R[H]$ stable lattice $ M_\lambda \subset V_\lambda $.
Put $ D_\lambda := \Hom_{F[H]} (V_\lambda, V_\lambda)$. Then
$ D_\lambda$ is a division algebra and the multiplicity $m^j_\lambda$ of $V_\lambda$ in
$H^j_\Phi(X_H, \calE_{|X_H}) \otimes_R F  $ equals
\begin{align*}
m^j_\lambda & = \dim_{D_\lambda} \Hom_{F[H]} \bigl(V_\lambda, H^j_{\Phi} (X_H, \calE_{|X_H} ) \otimes_R F \bigr) \\
	  & =  ( \deg V_\lambda)^{-1} \dim_F \Hom_{F[H]} ( V_\lambda , H^j_{\Phi} (X_H, \calE_{|X_H} ) \otimes_R F )   \\
	  & =  (\deg V_\lambda)^{-1} H^j_\Phi (X_H, \Hom_{R[H]} (M_\lambda , \calE_{|X_H} )) \otimes_R F  .
\end{align*}
Recall that $\deg V_\lambda = \dim_F(D_\lambda)$.
\end{proof}

\section{Further comments}\label{sec:Comments}

We add some remarks in order to clarify some assumptions for the main result.

\subsection{The finiteness condition}
If $\calE$ is a constant sheaf on $X$, then condition \cFone implies \cFtwo. In general this need not be the case.
This can already be seen in examples where the group acts trivially on the space.
Let $X$ be the unit disc in $\bbC$ and $G = \bbZ / 2 \bbZ$. Then there exists a
sheaf $\calE$ of $\bbZ[G]$-modules on $X$ such that $\com{H}(X, \calE)$ is finitely generated but $ H^2(X, \calE^G)$
is not finitely generated as $\bbZ$-module, i.e.~\cFone holds but \cFtwo fails.

It follows from a \v{C}ech cohomology argument that if $X$ and all $X^H$ are compact and homology locally connected (HLC),
then condition \cF holds for every locally 
constant $G$-sheaf $\calE$ with finitely generated stalks (for the family of all supports).

\subsection{Sheaves of vectorspaces}
If we replace $R$ by $F$ and start with a $G$-sheaf $\calE_F$ of $F$-vectorspaces over $X$, then both
sides of the formula in the theorem make sense under suitable cohomological finiteness assumptions.
However, there are examples which show that the theorem does not hold in this situation.
For instance, let $X = S^1$ be the unit circle with the nontrivial action of $ G = \bbZ / 2 \bbZ $ by rotations.
There is a $G$-sheaf of $F$-vectorspaces $\calE_F$ such that $\com{H}(S^1,\calE_F)$ is finite dimensional and 
$\chi(S^1, \calE_F )=1$. Then clearly 
$| G | \chi ( X/G , \pi^G_* \calE_F ) \neq \chi (X, \calE_F )$
since the left hand side is an even number.
In fact, most complications in the proof arise from the fact that we have to work with sheaves over the ring $R$. 

\subsection{Cohomology of arithmetic groups}
We indicate that the assumptions of the theorem hold for the cohomology of arithmetic
groups. Let $A$ be a reductive algebraic group defined over $\bbQ$ ($A$ is not necessarily
connected). By $\Gamma \subset  A (\bbQ)$ we denote an arithmetic group.
Assume that $G \subset \Aut_\bbQ(A)$ is a finite
subgroup which acts on $\Gamma$ and on a finite dimensional rational representation $ \rho $ of $A$
on a $\bbQ$-vectorspace $ E $ such that 
 $g ( \rho (\eta) e) = \rho  ( g(\eta)) ge$
for all $e \in E$, $\eta \in A(\bbQ)$, $g \in G$.
Let $ Y $ be the symmetric space attached to $ A (\bbR)$. Then $ G $ and $ \Gamma $ act on $Y$. Put
$ X := \Gamma \backslash Y $ as topological quotient and denote by $ f : Y \longrightarrow X $ the natural projection.
We choose a $G$ and $\Gamma$-stable lattice $ L $ in $ E $. Put $ \calE = f ^\Gamma_\ast L_Y $, where $ L_Y $
is the constant sheaf with stalks $ L$ on $Y$. Then $ G $ acts on $ \com{H} ( X, \calE) \otimes_{\bbZ} \bbQ
= \com{H}(\Gamma, L) \otimes_{\bbZ} \bbQ $. To see that the assumptions of the theorem hold one uses the
Borel-Serre compactification for $ X $ and $X^H$, see \cite{BorelSerre1973}.

\subsection{} 
For a paracompactifying family of supports $\Phi$ on $X$ there is in general no equality of the form
\begin{equation*}
    \com{H}_\Phi(X,\calE)\otimes_R F = \com{H}_\Phi(X,\calE \otimes_R F)
\end{equation*}
for a sheaf of $R$-modules $\calE$ on $X$. For
the family of compact supports this equality holds.
For cohomology  of arithmetic groups, we have for all $R[\Gamma]$-modules $M$ and all $j$ that
$
H^j(\Gamma, M) \otimes_R F = H^j (\Gamma, M \otimes_R F)  \:.$
This follows from the existence of a resolution $P_\bullet \longrightarrow \bbZ \longrightarrow 0 $
of $\bbZ $ by finitely generated free $\bbZ [\Gamma]$-modules and since $F$ is flat as $R$-module, see  Thm.~11.4.4 in
\cite{BorelSerre1973} and p.~193 in \cite{BrownBook1982}.

\providecommand{\bysame}{\leavevmode\hbox to3em{\hrulefill}\thinspace}
\providecommand{\href}[2]{#2}

\end{document}